\def\umono{\ar@{_{(}->}[u]}
\def\uumono{\ar@{_{(}->}[uu]}
\def\lmono{\ar@{_{(}->}[l]}
\def\llmono{\ar@{_{(}->}[ll]}
\newtheorem{theorem}{Theorem}[section]
\newtheorem{proposition}[theorem]{Proposition}
\newtheorem{corollary}[theorem]{Corollary}
\newtheorem{definition}[theorem]{Definition}
\newtheorem{remark}[theorem]{Remark}
\newtheorem{example}[theorem]{Example}
\title[Conditional flatness, fiberwise localizations, and admissible reflections
]{Conditional flatness, fiberwise localizations, and admissible reflections
}
\author{Marino Gran}
\thanks{This work was supported by the Fonds de la Recherche Scientifique - FNRS under Grant n° CDR J.0080.23}
\address{Institut de Recherche en Math\'ematique et Physique\\
Chemin du Cyclotron 2\\
B1348 Louvain-la-Neuve, Belgique}
\email{marino.gran@uclouvain.be}
\author{J\'er\^{o}me Scherer}
\address{\'Ecole Polytechnique F\'ed\'erale de Lausanne, EPFL \\ Mathematics\\ Station 8, MA B3 455
CH-1015 Lausanne, Switzerland}
\email{jerome.scherer@epfl.ch}
\subjclass[2010]{Primary 18E13, 55P60 ; Secondary 18A40, 18E50, 18N40, 18N55, 20N99, 55R70}
\date{\today}
\begin{document}


\begin{abstract}
We extend the group-theoretic notion of conditional flatness for a localization functor to any  pointed category, and investigate 
it in the context of homological categories and of semi-abelian categories.
In the presence of functorial fiberwise localization analogous results to those obtained in the category of groups hold, and we
provide existence theorems for certain localization functors in specific semi-abelian categories. We 
prove that a Birkhoff subcategory of an ideal determined category yields a conditionally flat localization,
and explain how conditional flatness corresponds  to the property of admissibility of an adjunction from the point of view of categorical Galois theory.
Under the assumption of fiberwise localization we give a simple criterion to determine when a (normal epi)-reflection is a torsion-free reflection.
This is shown to apply in particular to nullification functors in any semi-abelian variety of universal algebras. 
We also relate semi-left-exactness for a localization functor $L$ with what is called right properness for
the $L$-local model structure.
\end{abstract}


\maketitle


\section*{Introduction}
\label{sec intro}

In \cite{CondiFlat} the notion of \emph{conditionally flat functor} was introduced by the second author and Farjoun in order to investigate pullback preservation 
properties related to homotopical localization functors. 
This was first done in the category of topological spaces, and then interpreted in the context of the category of groups, where short exact sequences replace fibration sequences. 
Given a fibration sequence
\begin{equation}\label{fibration} 
\xymatrix{F \ar[r] &E  \ar[r] & B}
\end{equation}  
of topological spaces and a morphism $X \rightarrow B$, a natural question is to study the properties of the original fibration \eqref{fibration} 
that are inherited by the pullback fibration along $X \rightarrow B$. 
Given a functor $L$, a fibration sequence \eqref{fibration} over a connected space $B$ is called \emph{$L$-flat} in analogy with the algebraic notion
if the sequence 
\begin{equation*}
\xymatrix{L(F) \ar[r] &L(E ) \ar[r] & L(B)}
\end{equation*}  
is again a homotopy fibration sequence. There are several examples of such preservation in the literature 
on localizations in homotopy theory (see \cite{MR51:1825, MR2581908}, for instance). 
A functor $L$ is then a \emph{conditionally flat functor} when any pullback of an $L$-flat fibration is again $L$-flat.
In the topological context the property of conditional flatness was shown to characterize nullification functors among localization functors, this is
\cite[Theorem~2.1]{CondiFlat}), which follows from Berrick and Farjoun's work in \cite{MR1997044} and relies on the existence
of fiberwise localization. The latter is a construction which had been around already before 1980, but which May brought forward in \cite{MR554323}, 
noticing the key role it played in Sullivan's paper on the Adams conjecture \cite{MR442930}.
 
 When moving from the category of topological spaces to the category $\mathsf{Grp}$ of groups, fibration sequences were replaced by short exact sequences 
 \begin{equation*}
\xymatrix{0 \ar[r] & K \ar[r] &A   \ar[r] & B \ar[r] & 0,
}\end{equation*}
and the ``flatness property'' of such exact sequences was then considered with respect to pullbacks along group homomorphisms $C \rightarrow B$.
In the case of groups, a major difference with the case of topological spaces is then that conditional flatness of a functor no longer characterizes nullification functors. 
These are conditionally flat, but so are all localization functors associated to a variety of groups in the sense of \cite{MR0215899}.

The aim of the present paper is then twofold. On the one hand we widely extend the context from the category of groups to the abstract one of semi-abelian category,
(in the sense of Janelidze, M\'arki, and Tholen \cite{JMT}), thus including 
many algebraic examples such as the categories of rings, Lie algebras, crossed modules, compact groups \cite{TopologicalSemiAb} and cocommutative 
Hopf algebras \cite{GSV}. We show in Proposition~\ref{prop:condiflatfiberwise} that conditional flatness is characterized, 
in a general context including any semi-abelian category, by the same
properties as in the category of groups, as long as a fiberwise version of the localization functor is available. 
On the other hand we establish a more useful criterion implying conditional flatness, namely
that pullbacks of $\eta_C\colon C \rightarrow L(C)$ along any regular epimorphism in $\mathcal C$ between $L$-local objects (i.e. 
objects lying in the reflective subcategory) should be inverted by~$L$. This property is well-known in category theory, and it has been 
used to investigate several adjunctions between algebraic categories. 
Indeed, this latter is exactly the property of \emph{admissibility} of a reflection from the point of view of categorical Galois theory \cite{MR1061480}, as  
in Janelidze and Kelly's \cite{JK} (see also \cite{EveraertGran}, and the references therein).
It is always true that a conditionally flat reflector induces an \emph{admissible} reflection in the sense of categorical Galois theory.


Moreover, this condition actually turns out to be \emph{equivalent} to the one of \emph{admissibility} under the assumption of 
functorial fiberwise localization (see Proposition \ref{prop:condiflatfiberwise2}).

We also offer one result which does not depend on the existence of fiberwise localization, in the case of a reflection onto a Birkhoff subcategory 
(i.e. when the reflective subcategory is closed under regular quotients and subobjects):

\medskip

\noindent
{\bf Theorem~\ref{thm:Birkhoff}.}
\emph{When $\mathcal X$ is a Birkhoff subcategory of an ideal determined category $\mathcal C$
 the corresponding pointed endofunctor $L = U F \colon \mathcal C \rightarrow \mathcal C$ is
conditionally flat.}

\medskip

This result then applies to many interesting examples, since any subvariety of a semi-abelian variety \cite{BournJanelidze} provides an example of Birkhoff subcategory. 

Under the assumption of fiberwise localization we then characterize torsion-free reflections $F \colon \mathcal C \rightarrow \mathcal X$ in any homological category $\mathcal C$ among (normal epi)-reflections in terms of the property of stability under extensions of $\mathcal X$ in $\mathcal C$ (Proposition \ref{lem:barPAPA}). This result applies in particular to any nullification functor in a semi-abelian variety of universal algebras \cite{BournJanelidze} (Corollary \ref{thm:nullificationproper}).
Note that, unlike in the abelian case, in the semi-abelian context a (normal epi)-reflective subcategory stable under extensions is not necessarily a torsion-free subcategory, as Janelidze and Tholen observed in \cite{MR2342832}.

In the final section of the paper we adopt a homotopical viewpoint and revisit the results in terms of model categorical
properties. Preservation of $L$-flatness under pullbacks is related here to semi-left-exactness, aka right properness. This is already
present in the pioneering work by Cassidy, H\'ebert, and Kelly, \cite{MR779198}, as explained by Rosick\'{y} and Tholen in \cite{MR2369170}.
Let us mention also the article \cite{MR3071143} by Wendt where right properness of the $L$-local model structure is explicitly related to
the work of Berrick and Farjoun, \cite{MR1997044}.
It is also interesting to remark that the $\infty$-analogues of semi-left-exact-localizations studied by Gepner and Kock in \cite{MR3641669}
correspond to so-called locally cartesian localizations. In this setting fiberwise methods (localization in slice categories) are always at hand and 
heavily used.

The article is written for a public of both category theorists and topologists.


\medskip

\noindent {\bf Acknowledgements.} This work started when the second
author visited the first author in the Universit\'e catholique de Louvain-la-Neuve in 2015. It benefitted from
conversations with Olivia Monjon and Florence Sterck when we finally got back to it in the Fall of 2021.
We also thank the anonymous referee for several useful remarks and suggestions that improved the presentation of the article, and in particular for Remark \ref{rem:CassidyHebertKelly}.
\section{Regular, homological and semi-abelian categories}
\label{sec:setup}

\subsection{Regular categories}
\label{subsec:regular}
Recall that a finitely complete category $\mathcal C$ is a \emph{regular category} if two properties are satisfied:
 \begin{itemize}
 \item any morphism can be factored into a regular epimorphism (i.e. a coequalizer
of a pair of parallel morphisms) followed by a monomorphism ;
\item regular epimorphisms in $\mathcal C$ are stable under pullbacks: this 
means that the arrow $\pi_1$ in a pullback
\[
\xymatrix{{E \times_B A} \ar[r]^-{\pi _2} \ar@{->>}[d]_{\pi_1} & A  \ar@{->>}[d]^{f} \\
E \ar[r]_p& B
}
\]
is a regular epimorphism whenever $f$ is a regular epimorphism. 
\end{itemize}
Recall that in a regular category, regular epimorphisms compose and, moreover, if a composite $g \circ f$
is a regular epimorphism, then so is~$g$.

\subsection{Homological categories}
\label{subsec:homological}
When a regular category $\mathcal C$ is pointed (i.e. it has a zero object, denoted by $0$) one says that it is \emph{homological} \cite{BB}  if the \emph{Split Short Five Lemma} holds in $\mathcal C$: 
given a commutative diagram 
\[
\xymatrix@=12pt{0\ar[rr]_{}&&K\ar[rr]^{\mathsf{ker}(f)}\ar[dd]_{u}&&A\ar[dd]^{v}
\ar[rr]^{f}&& B \ar@<1ex>[ll]^s \ar[dd]^{w}\\
&&&&&\\ 0\ar[rr]^{}&&K'\ar[rr]_{\mathsf{ker}(f')}
&&A'\ar[rr]^{f'}&&B' \ar@<1ex>[ll]^{s'} }
\]
in $\mathcal C$ where $f \circ s = 1_B$ and $f' \circ s' = 1_{B'}$, $\mathsf{ker}(f)$ is the kernel of $f$ and $\mathsf{ker}(f')$ is the kernel of $f'$, then $v$ is an isomorphism whenever $u$ and $w$ are isomorphisms.
It is well-known that this assumption implies in particular that any regular epimorphism is a cokernel (so that regular epimorphisms
coincide with normal epimorphisms), and then the classical Short Five Lemma 
holds in a homological category. This implies in particular the validity of the following useful
\begin{proposition}\label{pullback-right}\cite{BB}
Given a commutative diagram of short exact sequences in a homological category $\mathcal C$
\[
\xymatrix@=12pt{0\ar[rr]_{}&&K\ar[rr]^{\mathsf{ker}(f)}\ar[dd]_{u}&&A\ar[dd]^{v}
\ar[rr]^{f}&& B  \ar[dd]^{w} \ar[rr] & & 0\\
&&&&&\\ 0\ar[rr]^{}&&K'\ar[rr]_{\mathsf{ker}(f')}
&&A'\ar[rr]_{f'}&&B' \ar[rr] & & 0 }
\]
$u$ is an isomorphism if and only if the right-hand commutative square is a pullback.
\end{proposition}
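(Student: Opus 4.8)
The plan is to reduce the whole statement to a single application of the (classical) Short Five Lemma, which is available in the homological category $\mathcal C$. First I would form the pullback $P = A' \times_{B'} B$ of $f'$ along $w$, with projections $p \colon P \to A'$ and $q \colon P \to B$; by commutativity of the right-hand square the two morphisms $v \colon A \to A'$ and $f \colon A \to B$ induce a unique comparison morphism $\phi \colon A \to P$ with $p\phi = v$ and $q\phi = f$. The key observation, which is just the universal property of the pullback, is that the right-hand square of the given diagram is a pullback if and only if $\phi$ is an isomorphism.

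Next I would exhibit $\phi$ as the middle component of a morphism of short exact sequences. Since $f'$ is a regular epimorphism and regular epimorphisms are stable under pullback in a regular category, $q$ is a regular epimorphism, hence a normal epimorphism in $\mathcal C$, so the sequence $0 \rightarrow \ker(q) \rightarrow P \rightarrow B \rightarrow 0$ induced by $q$ is short exact; pullback pasting identifies $\ker(q) = P \times_B 0 = A' \times_{B'} 0$ with $\ker(f') = K'$. Using the commutativity of the left-hand square (so that the composite $K \rightarrow A \xrightarrow{\,v\,} A'$ equals $\ker(f')\circ u$) together with the fact that the projections of a pullback are jointly monic, one checks that the morphism $\phi$ induces between the kernels $\ker(f)=K$ and $\ker(q)=K'$ is exactly $u$. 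Hence there is a commutative diagram of short exact sequences
\[
\xymatrix@=12pt{0 \ar[r] & K \ar[r] \ar[d]_u & A \ar[r]^f \ar[d]^{\phi} & B \ar[r] \ar@{=}[d] & 0 \\ 0 \ar[r] & K' \ar[r] & P \ar[r]_q & B \ar[r] & 0}
\]
whose right-hand vertical map is the identity.

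Finally I would conclude in both directions. If $u$ is an isomorphism, then the left and right vertical maps of the diagram are isomorphisms, so the Short Five Lemma forces $\phi$ to be an isomorphism, i.e. the right-hand square of the original diagram is a pullback. Conversely, if that square is a pullback then $\phi$ is an isomorphism, and $u$, being the restriction of the isomorphism $\phi$ to the kernels of $f$ and $q$ (isomorphisms preserve and reflect kernels), is itself an isomorphism.

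I do not expect a genuine obstacle here. The only points that require a little care are the identification of $\ker(q)$ with $K'$ — which rests on pullback-stability of regular epimorphisms together with the coincidence of regular and normal epimorphisms in a homological category — and the short diagram chase verifying that the comparison morphism $\phi$ restricts precisely to $u$ on kernels. Once these routine checks are in place, the proposition is an immediate consequence of the Short Five Lemma.
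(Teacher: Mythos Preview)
Your argument is correct and is the standard proof of this fact, reducing both directions to the Short Five Lemma via the comparison map into the pullback. Note, however, that the paper does not actually prove this proposition: it is stated with a citation to \cite{BB} and used as a known result, so there is no proof in the paper to compare against.
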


\subsection{Semi-abelian categories}
\label{subsec:semi-abelian}
A \emph{semi-abelian} category \cite{JMT} is a finitely cocomplete
 homological category such that
every (internal) equivalence relation in $\mathcal C$ is a kernel pair. This means that $\mathcal C$ is an exact category (in the sense of Barr).
Among the many examples of semi-abelian categories there are the category of groups, 
Lie algebras, crossed modules \cite{JMT}, compact Hausdorff groups \cite{TopologicalSemiAb}, non-unital rings, loops \cite{TopologicalSemiAb}, cocommutative Hopf algebras over a field \cite{GSV}, 
non-unital $\mathsf{C}^*$-algebras \cite{Gran-Ros}, Heyting semilattices \cite{Johnstone}, etc.

\section{Localization and factorization systems}
\label{sec:factorization}
In this section we work with a semi-abelian category $\mathcal C$ as defined above, even though many of the facts we recall
now are valid in a more general setting. The main references here are Bousfield's \cite{MR57:17648} for the homotopy theory
viewpoint and the article \cite{MR779198} by Cassidy, H\'ebert, and Kelly for the categorical side.

\subsection{Factorization systems}
\label{subsec:factorization}
A \emph{prefactorization system} in $\mathcal C$ consists of classes of maps $\mathcal E$ and $\mathcal M$ determining each other
by \emph{unique} lifting properties or orthogonality properties. Thus a morphism $f$ belongs to $\mathcal E$ if and only if there is a unique
filler in any commutative square
\[
\xymatrix{
A \ar@{->}[r] \ar@{->}[d]_{f} & X \ar[d]^{p} \\
B\ar@{->}[r] \ar@{.>}[ur] & Y
}
\]
where $p$ belongs to $\mathcal M$. We write $\mathcal E = {}^{\perp} \mathcal M$. Dually, $\mathcal M = \mathcal E^{\perp}$.
A \emph{factorization system} is a prefactorization system where every map can be factored into a morphism in $\mathcal E$
followed by a morphism in $\mathcal M$.

\subsection{Localization and reflectors}
\label{subsec:localization}
A pointed endofunctor $(L \colon \mathcal C \rightarrow \mathcal C,\eta \colon 1_{\mathcal C} \Rightarrow L)$ is called idempotent 
if $L\eta \colon L \rightarrow LL$ is an isomorphism and $L\eta =\eta L$. It is common in algebraic topology to call \emph{localization} 
an idempotent pointed endofunctor $(L ,\eta )$, and we shall adopt this terminology. Note, however, that in category theory the meaning of the term ``localization'' is quite different, meaning a reflective subcategory where the reflector preserves (finite) limits. In this context our localization functors are usually called idempotent monads since the inverse of $\eta L = L \eta$ defines a monad multiplication.

A factorization system $({\mathcal E}, {\mathcal M})$ gives rise to a localization functor $L: \mathcal C \rightarrow \mathcal C$ by factoring the morphism
$X \rightarrow 0$ as explained in \cite[2.5]{MR57:17648}. This is a co-augmented and
idempotent functor, and any object $X$ comes with a natural morphism $\eta_X \colon X \rightarrow L(X)$ to an object $L(X)$ having
the property that $L(X ) \rightarrow 0$ belongs to $\mathcal M$. Such an object is called \emph{$L$-local}.

Conversely, when $\mathcal C$ is finitely well-complete, a localization functor $L$ yields a factorization system with $\mathcal E(L)$ consisting of 
all $L$-equivalences,  i.e. morphisms turned into isomorphisms by $L$, and $\mathcal M(L) = \mathcal E(L)^{\perp}$. This is due to
Cassidy, H\'ebert, and Kelly in \cite[Corollary~3.4]{MR779198} (see also the more recent article by Salch, \cite[Theorem~3.4]{MR3606285},
where the author already rephrased the original results).

\medskip

\begin{remark}
\label{rem:CassidyHebertKelly}
{\rm There is a well-known one-to-one correspondence between idempotent monads and full reflective subcategories. However there is no correspondence between localization functors and factorization systems, as shown in \cite{MR779198}. If one associates to a factorization system its canonical localization functor, and then apply the above construction to get a factorization system back, one does not in general recover the original factorization system, but its \emph{reflective interior}.}   
\end{remark}

\subsection{Birkhoff subcategories}
\label{subsec:Birkhoff}
In our work we shall also be interested in the situation where $\mathcal X$ is a \emph{Birkhoff subcategory} of a category $\mathcal C$
\begin{equation*}
\xymatrix@=20pt{ {\mathcal X \, } \ar@<-1ex>[r]_-{U}^{\perp} & {\, \mathcal C \, }
\ar@<-2ex>[l]_F}
\end{equation*}
where $U$ is the inclusion functor, and $F$ its left adjoint.
Being a Birkhoff subcategory means that $\mathcal X$ is a full (replete) and (regular epi)-reflective subcategory of $\mathcal C$ with the additional property 
that it is closed in $\mathcal C$ under regular quotients. Accordingly,
each component  $\eta_A \colon A \rightarrow UF(A)$ of the unit of the adjunction is a regular epimorphism and, moreover, $\mathcal X$ is also 
stable in $\mathcal C$ under regular quotients: if $\xymatrix{A \ar@{->>}[r]^f & B }$ is a regular epimorphism in $\mathcal C$ with $A$ in $\mathcal X$, 
then $B$ also belongs to $\mathcal X$.

\begin{example}
\label{ex:Birkhoff}
{\rm Any subvariety $\mathcal X$ of a variety $\mathcal C$ of universal algebras is a Birkhoff subcategory by the classical Birkhoff Theorem. 
This applies to many situations: by adding \emph{any} identity to the ones of a given algebraic theory one always determines a Birkhoff subcategory. 
This includes of course the classical examples of abelian groups or, more generally, of nilpotent or of solvable groups of a fixed class $\leq c$ in the category $\mathsf{Grp}$ of groups.
}
\end{example}

\section{Conditional flatness}
\label{sec:condi}
Our aim in this section is to study the notions of flatness and conditional flatness associated with a localization functor 
in a semi-abelian category. In analogy to the algebraic notion of flatness (tensoring by a flat ring preserves exactness), flatness
for homotopy functors was defined by Farjoun and the second author in \cite{CondiFlat} in terms of preservation of fibration sequences. 
The same was done in the category of groups in terms of preservation of extensions. 


\label{subsec:flat}
In a pointed category $\mathcal C$ one can translate this 
definition as follows.


\begin{definition}
\label{def:flat}
{\rm An extension $0 \rightarrow K \rightarrow E \rightarrow Q \rightarrow 0$
is  \emph{$L$-flat} if the functor $L \colon \mathcal C \rightarrow \mathcal C$ sends it again to an extension: $0 \rightarrow L(K) \rightarrow L(E) \rightarrow L(Q) \rightarrow 0$.
}
\end{definition}
The definition of conditional flatness from \cite{CondiFlat} still makes sense in any pointed category:

\begin{definition}
\label{def:condiflat}
{\rm A functor $L \colon \mathcal C \rightarrow \mathcal C$ in a pointed category $\mathcal C$ is \emph{conditionally flat} if any pullback of an $L$-flat extension is again 
$L$-flat.}
\end{definition}

\subsection{Fiberwise localization}
\label{subsec:fiberwise}

\begin{definition}
\label{def:fiberwise}
{\rm  Given a functor $L \colon \mathcal C \rightarrow \mathcal C$ in a pointed category $\mathcal C$ we say that an extension 
$0 \rightarrow K \rightarrow E \rightarrow Q \rightarrow 0$ in $\mathcal C$ admits a \emph{fiberwise localization} 
if there is a commutative diagram of horizontal extensions
\begin{equation}\label{construction}
\xymatrix{
0 \ar@{->}[r]  & K  \ar@{->}[r] \ar@{->}[d]^{\eta_K} & E  \ar@{->}[r] \ar@{->}[d]^e & Q  \ar@{->}[r] \ar@{=}[d] & 0   \\
0 \ar@{->}[r] & L(K) \ar@{->}[r]  & \overline E  \ar@{->}[r]  & Q  \ar@{->}[r]  & 0
}
\end{equation}
where $e : E \rightarrow \overline E$ is inverted by $L$. If the assignment $E \rightarrow \overline E$ is functorial (in the obvious sense) one says that it forms a \emph{functorial fiberwise localization} for $L$.
}
\end{definition}

Any localization in the category $\mathsf{Grp}$ of groups enjoys fiberwise localization as shown by 
Casacuberta and Descheemaeker \cite{MR2125447}. Other interesting examples will be considered at the end of this section,
but there are also localization functors in certain homological categories that do not admit fiberwise versions, see \cite{MSS}.

\subsection{Pullbacks along reflections}
\label{subsec:pbalong}
We are now going to show that in any homological category the existence of a functorial fiberwise localization has an interesting consequence.
The following result refines and generalizes \cite[Proposition~4.1]{CondiFlat} from the category of groups to any homological category. The second, more amenable, condition describes admissible reflections in the sense of Janelidze-Kelly \cite{JK}, as we discuss in the next section.

\begin{proposition}
\label{prop:condiflatfiberwise}
Let ${\mathcal X }$ be a full reflective subcategory of a homological category $\mathcal C$ 
\[
\xymatrix@=20pt{ {\mathcal X \, } \ar@<-1ex>[r]_-{U}^{\perp} & {\, \mathcal C \, }
\ar@<-2ex>[l]_F}
\]
that admits a
functorial fiberwise localization. Then the following conditions are equivalent:
\begin{enumerate}
\item the corresponding localization $(L=UF, \eta)$ is conditionally flat;
\item the pullback of $\eta_C\colon C \rightarrow L(C)$
along any regular epimorphism in $\mathcal C$ between $L$-local objects is inverted by $L$.
\end{enumerate}
\end{proposition}

\begin{proof}
Condition $(1)$ clearly implies $(2)$, and let us then prove that $(2)$ implies conditional flatness of $L$. 
Let $0 \rightarrow K \rightarrow E \rightarrow Q \rightarrow 0$ be an $L$-flat extension, and $f : X \rightarrow Q$ any arrow. 
We first observe that, by applying fiberwise localization, there is no restriction in assuming that $K$ is $L$-local. 
In order to see this, consider the right-hand pullback along $f$ and the kernel $\kappa$ of $p_2$:
\[
\xymatrix{
& & P \ar@{->>}[r]^{p_2} \ar@{->}[d]^{p_1}  & X \ar[d]^f \ar[r] & 0\\
0 \ar[r] & K \ar[r]_k \ar@{.>}[ur]^{\kappa} & E\ar@{->>}[r]_p & Q \ar[r] & 0
}
\]
By using the functorial fiberwise localization of $L$ one gets the following commutative diagram of short exact sequences 
(here we use the same notations as in Definition \ref{def:fiberwise}):
\[
\xymatrix{
& & \overline{P}  \ar@{->>}[r] \ar@{->}[d] & X \ar[d]^{f} \ar[r] & 0\\
0 \ar[r] & L(K) \ar[r]  \ar[ur] & \overline{E} \ar@{->>}[r] & Q \ar[r] & 0
}
\]
In any homological category the right-hand square in the above diagram is then again a pullback (this follows from Proposition \ref{pullback-right}, since $\mathcal C$ is assumed to be a homological category). If we write $\pi \colon P \rightarrow \overline{P}$ for the $L$-equivalence 
in the construction \eqref{construction} of the exact sequence $0 \rightarrow L(K) \longrightarrow \overline{P} \longrightarrow X \rightarrow 0$
by fiberwise localization, one obviously has that $\eta_P \cong \eta_{\overline{P}} \circ \pi$, and this implies that the latter exact sequence is $L$-flat 
if and only if so is the sequence $0 \rightarrow K \longrightarrow P \longrightarrow X \rightarrow 0$.

The next step is to reduce the proof to the case of an extension of $L$-local objects. This is done by noticing that, since $K$ in the $L$-flat exact sequence 
$0 \rightarrow K \longrightarrow E \longrightarrow Q \rightarrow 0$ can already be assumed to be $L$-local, the square
\[
\xymatrix{
E \ar@{->>}[r] \ar@{->}[d] & Q \ar[d] \\
L(E) \ar@{->>}[r] & L(Q)
}
\]
is a pullback (we again use Proposition \ref{pullback-right}). Finally, we use the universal property of the localization and factor any map $X \rightarrow L(Q)$
through $\eta_X \colon X \rightarrow L(X)$ to decompose the pullback $P$ of $L(E) \rightarrow L(Q)$ and $X \rightarrow L(Q)$ as the composite of two pullbacks:
\[
\xymatrix{P \ar[r] \ar[d] &X \ar[d]  \\
P' \ar@{->>}[r] \ar[d] & L(X) \ar[d] \\
L(E) \ar@{->>}[r] & L(Q).
}
\]
Here $P'$ is a limit of $L$-local objects, hence $L$-local, and therefore yields, by regularity of $\mathcal C$, another regular epimorphism $P' \rightarrow L(X)$
of $L$-local objects. The upper square is of the form required in order to apply assumption $(2)$.
\end{proof}

\subsection{Existence of functorial fiberwise localization}
\label{subsec:examplesfiberwise}
Besides the example of the category $\mathsf{Grp}$ of groups, there are many other examples of categories admitting functorial fibrewise localizations in certain circumstances. We focus from here on localization functors for which the coaugmentation morphisms $\eta_X \colon X \rightarrow L(X)$ are always normal epimorphisms. In that case we write $t_X \colon T(X) \rightarrow X$ for the kernel of $\eta_X$ and identify the latter with the quotient map $X \rightarrow X/T(X)$.

\begin{proposition}\label{iff-c}
Let $\mathcal C$ be a homological category and $L \colon \mathcal C \rightarrow \mathcal C$ be a localization functor such that any coaugmentation morphism $\eta_X \colon X \rightarrow L(X)$ is a normal epimorphism. Then $\mathcal C$ admits functorial fiberwise localization (with respect to $L$) if and only if one of the following conditions holds:
\begin{enumerate}
    \item for any normal monomorphism $k \colon K \rightarrow E$ in $\mathcal C$, the pushout of 
$k$ along $\eta_K$ exists 
\begin{equation}\label{pushout-mono}
\xymatrix{
K \ar[r]^k \ar[d]_{\eta_K}& E \ar[d] \\
L(K) \ar[r]_{\overline{k}} & \overline{E}
}
\end{equation}
and this square is a pullback;
\item
for any normal monomorphism $k \colon K \rightarrow E$ in $\mathcal C$, the pushout \eqref{pushout-mono} of 
$k$ along $\eta_K$ exists and the morphism $\overline{k}$ is a monomorphism.
\end{enumerate}
\end{proposition}

\begin{proof}
Let us assume first that fiberwise localization exists.
Given any extension $$\xymatrix{0 \ar[r] & K \ar[r]^k & E  \ar[r]^f & B   \ar[r] & 0 }$$ a commutative diagram
$$\xymatrix@=35pt{
0 \ar[r] & K \ar[r]^k \ar[d]_{\eta_K}& E \ar[d]^{e} \ar[r]^f & B \ar@{=}[d]  \ar[r] & 0 \\
0 \ar[r] & L(K) \ar[r]_{\overline{k}} & \bar E \ar[r]^{g} & B \ar[r] &  0 
}
$$
exists by the assumption of  fiberwise localization. The left-hand square is clearly a pullback, and the middle vertical morphism $e$ is a regular epimorphism since the base category $\mathcal C$ is homological (see Proposition $8$ in \cite{Bourn-protomodularity}). In this context any pullback of a regular epimorphism along any morphism is a pushout (see \cite{Bourn-Original}), and this proves that $(1)$ holds.
It is clear that $(1)$ implies $(2)$ since pullbacks reflect monomorphisms in $\mathcal C$ \cite{Bourn-Original}.

Assume then that $(2)$ holds. In particular one has the lower left-hand pushout in the commutative diagram
\begin{equation}\label{third}
\xymatrix@=35pt{
0 \ar[r] & K \ar[r]^k \ar[d]_{\eta_K}& E \ar[d]^{\pi} \ar[r]^f & B \ar@{=}[d]  \ar[r] & 0 \\
0 \ar[r] & K/T(K ) \ar[r]_{\overline{k}} & \overline{E} \ar@{.>}[r]_{\overline{f}} & B \ar[r] &  0. \\
}
\end{equation}
where $\overline{k}$ is a monomorphism by assumption.
Since $f \circ k \circ t_K = 0$ the universal properties of the cokernel $\eta_K$, and then of the left-hand pushout, yield a unique arrow $\overline{f} \colon \overline{E} \rightarrow B$ making the right-hand square above commute. The canonical factorization $\phi$ from $K/T(K)$ to the kernel $\mathsf{Ker}(\overline{f})$ of $\overline{f}$ such that $\mathsf{ker}(\overline{f}) \circ \phi = \overline{k}$ is then a monomorphism (since $\overline{k}$ is a monomorphism). It is also a regular epimorphism, since so is $\phi \circ \eta_K$, this latter being the pullback of $\pi$ along $\mathsf{ker}(\overline{f})$. It follows that $\phi$ is an isomorphism, and the lower sequence in diagram \ref{third} is then exact.

The proof will be then complete if we show that $\pi$ is inverted by $L$.
For this, consider the commutative diagram
\begin{equation*}
\xymatrix@=35pt{
   & T(K)  \ar[d]_{T(k)} &   & & \\
0 \ar[r] & T(E)  \ar[r]^{t_E} \ar[d]_{q}& E \ar[d]^{\pi} \ar[r]^{\eta_E} & L(E)  \ar@{=}[d]  \ar[r] & 0 \\
0 \ar[r] & T(E)/T(K ) \ar@{.>}[r]_-j & \overline{E}  \ar@{.>}[r]_p & L(E) \ar[r] &  0 \\
   }
\end{equation*}
where 
\begin{itemize}
\item $T(k)$ is a normal monomorphism since so is $t_E \circ T(k) \colon T(K) \rightarrow E$ (the assumption that $\overline{k}$ is a monomorphism implies that $T(K)$ is the intersection $ T(E) \cap \mathsf{Ker}(\pi)$ of two normal monomorphisms) and $t_E$ is a monomorphism;
\item $q$ is the quotient of $T(E)$ by $T(K)$;
\item $j$ is the unique morphism such that $j \circ q = \pi \circ t_E$;
\item $p$ is the unique morphism such that $p \circ \pi = \eta_E$.
\end{itemize}


Now, if $f \colon \overline{E} \rightarrow A$ is any morphism with $A$ a local object, then there is a unique morphism $\psi \colon L(E) \rightarrow A$ such that $\psi \circ \eta_E = f \circ \pi$. This morphism $\psi$ is also the unique one such that $\psi \circ p = f$ (since $\pi$ is an epimorphism), proving that $p = \eta_{\overline{E}}$ (and $L(E) = L(\overline{E}$)), so that $\pi$ is indeed inverted by $L$. 
One can easily check that this construction is functorial, and this completes the proof.
\end{proof}

\begin{remark}\label{N}
{\rm In any homological category a property equivalent to properties $(1)$ and $(2)$ used in Proposition \ref{iff-c} consists in requiring that 
the monomorphism  $$k \circ t_K \colon T(K) \rightarrow K \rightarrow E$$ is normal. This latter was called condition $(N)$ in \cite{EveraertGran2}.
The fact that $(N)$ is equivalent to condition $(1)$ easily follows from Proposition \ref{pullback-right}, by choosing the quotient $E/T(K)$ as $\overline{E}$ in diagram \eqref{pushout-mono}. We had used condition $(N)$ in a previous version of the article, and we thank the referee for suggesting to also consider condition $(2)$ in the above proposition. The equivalent property $(N)$ will now be useful in the following examples.
}
\end{remark}


\begin{example}
{\rm Proposition \ref{iff-c} can be applied to any homological category, hence in particular to the category $\mathsf{Grp(Top)}$ of topological groups 
\cite{TopologicalSemiAb}, that has the property that any regular epimorphism is normal. Consider a localization functor 
$L \colon \mathsf{Grp(Top)} \rightarrow \mathsf{Grp(Top)}$ for which each coaugmentation morphism $\eta_X$ of the localization 
is a normal epimorphism (= open surjective group homomorphism). Given any short exact sequence 
\begin{equation}
\label{exte} 
\xymatrix{0 \ar[r] &  K \ar[r] & E \ar[r] & B \ar[r] &  0}
\end{equation}
in $\mathsf{Grp(Top)}$, by taking the kernel $t_K \colon T(K) \rightarrow K$ of the unit $\eta_K$ of any localization one obtains a 
\emph{characteristic} subgroup $T(K)$ of $K$ 
(see Example $2.2$ in \cite{EveraertGran2}, for instance). Accordingly, the subgroup $T(K)$ is also normal in $E$, hence condition $(N)$ in Remark \ref{N} holds, as desired.
The same observation also applies to the category $\mathsf{Grp(Haus)}$ of Hausdorff groups.
}
\end{example}

\begin{example}
{\rm Let then $\mathsf{Hopf}_{A, coc}$ be the category of cocommutative Hopf algebras over a field $A$, that was shown to be 
semi-abelian in \cite{GSV}. Given an extension \eqref{exte}, by using the same notations as above, 
the Hopf subalgebra $T(K) \rightarrow K \rightarrow E$ induced by any localization functor 
$L \colon \mathsf{Hopf}_{A, coc} \rightarrow \mathsf{Hopf}_{A, coc}$ is also a \emph{normal} Hopf subalgebra of $E$. 
Indeed, denote by $S \colon K \rightarrow K$ the antipode of $E$, and by $\phi_e \colon K \rightarrow K$ the map defined, 
for any $e \in E$, by $\phi_e (t) =  e_1 t S(e_2)$ for any $t \in K$. Here we use the usual Sweedler convention for Hopf algebras, 
so that we write $\Delta (e) = e_1 \otimes e_2$, with $\Delta \colon E \rightarrow  E \otimes E$ the comultiplication. 
This map $\phi_e \colon K \rightarrow K$ is easily seen to be a Hopf algebra morphism. By functoriality of the natural transformation 
$t \colon T \Rightarrow \mathsf{id}_{\mathcal C}$ it follows that $\phi_e$ restricts to $T(K)$, yielding a morphism $T(K) \rightarrow T(K)$. 
This means that, for any $t \in T(K)$,
$\phi_e (t) \in T(K)$, hence $T(K)$ is normal in $E$, and condition $(N)$ then holds. We conclude that, whenever we have a localization functor 
$L \colon \mathsf{Hopf}_{A, coc} \rightarrow \mathsf{Hopf}_{A, coc}$ with the property that the coaugmentation
morphism $\eta_X\colon X \rightarrow L(X)$ is a normal epimorphism, the category $\mathsf{Hopf}_{A, coc}$ admits fiberwise localization. 
For instance, the ``abelianization functor'' $\mathsf{ab} \colon \mathsf{Hopf}_{A, coc} \rightarrow 
\mathsf{Hopf}_{A, coc}^{comm}$ as described in Section $4$ in \cite{GSV}, necessarily yields a functorial fiberwise localization, with 
$L = U \, \mathsf{ab}  \colon  \mathsf{Hopf}_{A, coc} \rightarrow \mathsf{Hopf}_{A, coc} $ (here 
$U \colon \mathsf{Hopf}_{A, coc}^{comm} \rightarrow \mathsf{Hopf}_{A, coc}$ is the forgetful functor).
}
\end{example}

\begin{remark}
{\rm One might hope that similar results hold whenever one is dealing with a category of internal groups in finitely complete category, 
since the examples mentioned here above, such as groups, topological groups, Hausdorff groups, and cocommutative Hopf algebras, 
are of this type (the category $\mathsf{Hopf}_{A, coc}$ can also be seen as the category of internal groups in the category of cocommutative 
coalgebras). This is not the case, as it follows from the results in \cite{MSS}, where some counter-examples are given in the semi-abelian 
category $\mathsf{XMod}$ of crossed modules, that can be also seen (up to a category equivalence) as the category $\mathsf{Grp(Cat})$ of internal 
groups in the category $\mathsf{Cat}$ of (small) categories (see \cite{JMT}, for instance, and the references therein).}
\end{remark}

\section{Admissible reflectors with respect to Galois theory}
\label{sec:admissibleGalois}
The type of pullbacks which appear in Proposition~\ref{prop:condiflatfiberwise} are the ones
 appearing in categorical Galois theory, in the form presented in the article \cite{JK} by Janelidze and Kelly. In the whole section we work in a homological
category $\mathcal C$, where we fix a 
 full reflective subcategory $\mathcal X$ as in 
 \begin{equation*}
\xymatrix@=20pt{ {\mathcal X \, } \ar@<-1ex>[r]_-{U}^{\perp} & {\, \mathcal C \, }
\ar@<-2ex>[l]_F}
\end{equation*}
and the corresponding localization functor $L = UF\colon \mathcal C \rightarrow \mathcal C$.

\begin{definition}\cite{JK}
\label{def:admissible}
{\rm The reflector $F \colon {\mathcal C } \rightarrow \mathcal X$ is \emph{admissible} for the class of regular epimorphisms 
if it preserves any pullback of the form
\begin{equation}\label{admissible-pb}
\xymatrix{
P \ar@{->}[r] \ar@{->}[d] & U(E) \ar@{->>}[d]^{U(x)} \\
X\ar@{->}[r]_-{\eta_X} & UF(X)
}
\end{equation}
where $x \colon E \rightarrow FX$ is a regular epimorphism in $\mathcal X$. 
}
\end{definition}

One could also require $F$ to preserve pullbacks as above for \emph{any} morphism $x$ in $\mathcal X$, in which case we are looking at semi-left exact reflections as introduced by Cassidy, H\'ebert, and Kelly in \cite{MR779198}. We will come back to this in Section~\ref{sec:torsionfree}.

\begin{definition}\cite{MR779198}
\label{def:semileftexact}
{\rm 
Let $\mathcal X$ a (normal epi)-reflective subcategory of $\mathcal C$. Then $F$ is \emph{semi-left-exact}, i.e., $F$ preserves all pullbacks of the form
\[
\xymatrix{
 P \ar@{->>}[r]^-{p_2} \ar@{->}[d]_{p_1}  & U(E) \ar[d]^x\\
X \ar@{->>}[r]_{\eta_C} & UF(X)
}
\]
where $x$ is any morphism in $\mathcal X$.
}
\end{definition}

In other words, the morphism $P \rightarrow U(E)$ coincides with the $P$-component of the unit $\eta_P \colon P \rightarrow UF(P)$ of the adjunction (up to unique isomorphism). There are several equivalent ways to characterize admissibility as we recall in the next proposition. They hold in particular for all the examples of semi-localizations of semi-abelian categories given in \cite{SemiLoc}.

\begin{proposition}
\label{prop:admissibleequivalences}
Let $\mathcal C$ be a homological category, $F \colon {\mathcal C } \rightarrow \mathcal X$ a reflector and $(L=UF, \eta)$ the corresponding localization. The following conditions are then equivalent:
\begin{enumerate}
\item The reflector $F$ is admissible for the class of regular epimorphisms;
\item the pullback of $\eta_C\colon C \rightarrow L(C)$
along any regular epimorphism in $\mathcal C$ between $L$-local objects is inverted by $L$; 
\item the functor $L=UF \colon \mathcal C \rightarrow \mathcal C$ preserves any pullback of the form
\[
\xymatrix{
 C \times_{L(C)} X \ar@{->>}[r]^-{p_2} \ar@{->}[d]_{p_1}  & X \ar@{->>}[d]^g\\
C \ar@{->>}[r]_{\eta_C} & UF(C)
}
\]
where $g$ is a regular epimorphism in $\mathcal C$ between objects in $\mathcal X$.
\end{enumerate}
\end{proposition}

\begin{proof}
The equivalence $(2) \Leftrightarrow (3)$ is obvious, while the equivalence between $(3)$ and $(1)$ follows easily from the fact that the functor $U \colon \mathcal X \rightarrow \mathcal C$ reflects limits, since it is a fully faithful right adjoint.    
\end{proof}

 
\begin{proposition}
\label{prop:condiflatfiberwise3}
 If $L= UF \colon \mathcal C \rightarrow \mathcal C$ is conditionally flat then the reflector $F \colon \mathcal C \rightarrow \mathcal X$ is admissible for the class of regular epimorphisms.
\end{proposition}

\begin{proof}
Let $K$ be the object part of the kernel of the vertical morphism $U(x)$  in \eqref{admissible-pb}. This is the object part of a limit of a diagram 
lying in $\mathcal X$, hence it lies itself in $\mathcal X$. The extension
\[
\xymatrix{0 \ar[r] & K \ar[r] &  U(E) \ar[r]^-{Ux}  & UF(X)\ar[r]   &0 
}
\]
is thus $L$-flat. If $L$ is conditionally flat, the (induced) pullback extension
\[
\xymatrix{0 \ar[r] & K \ar[r] &  P \ar[r] &  X \ar[r] & 0}
\] 
must be $L$-flat as well. This means that $L=UF$ takes it to an extension
\[
\xymatrix{0 \ar[r] & K \ar[r] & UF(P) \ar[r] & UF(X )\ar[r] & 0}
\]
where $K \cong UF(K)$ remains unchanged since it lies in $\mathcal X$. This extension comes with a natural transformation to the original extension:
\[
\xymatrix{0 \ar[r] & K \ar[r] \ar@{=}[d]& UF(P) \ar[r] \ar@{.>}[d]^{} &  UF(X) \ar@{=}[d] \ar[r] & 0 \\
0 \ar[r] & K \ar[r] &  U(E) \ar[r] &  UF(X) \ar[r] & 0. }
\]
We conclude by the Short Five Lemma (see \cite{BB}) that the middle dotted arrow is an isomorphism, and the arrow $P \rightarrow U(E)$ in the pullback \eqref{admissible-pb} is then isomorphic to the unit $\eta_P \colon P \rightarrow UF(P)$. This means that the reflector is admissible with respect to the class of regular epimorphisms, as desired.
\end{proof}

We can also reinterpret Proposition~\ref{prop:condiflatfiberwise} as follows:

\begin{proposition}
\label{prop:condiflatfiberwise2}
Let $\mathcal C$ be a homological category and assume that the localization functor $L$ admits a
functorial fiberwise localization. Then the functor $L$ is conditionally flat if and only if it is admissible with respect to regular epimorphisms. \hfill{\qed}
\end{proposition}

\begin{example}
{\rm It is well-known that any Birkhoff subcategory of a semi-abelian category induces an admissible reflector with respect 
to regular epimorphisms \cite{JK, MR2388093}. Together with the remarks in Section $3.3$ this implies in particular that 
any Birkhoff subcategory of the category $\mathsf{Hopf}_{A, coc}$ of cocommutative Hopf algebras over a field $A$ induces 
a conditionally flat functor $L$. In the semi-abelian category $\mathsf{Grp(Comp)}$ its Birkhoff subcategory $\mathsf{Grp(Prof)}$ 
of profinite groups also induces a conditionally flat functor, since the adjunction is admissible \cite{EveraertGran}. }
\end{example}

\section{The case of Birkhoff subcategories}
\label{sec:epireflections}
Let us restrict our attention to a Birkhoff subcategory $\mathcal X$ of a regular category $\mathcal C$
as in Subsection~\ref{subsec:Birkhoff}.
%
%
The suitable context to obtain the result of this section is the one of \emph{ideal determined} categories, as introduced in \cite{IDC}
by Janelidze, Mark\'{\i}, Tholen, and Ursini. These are regular categories $\mathcal C$ with binary coproducts such that 
\begin{enumerate}
\item any regular epimorphism in $\mathcal C$ is \emph{normal} (i.e. a \emph{cokernel});
\item \emph{normal monomorphisms are stable under images}: in any commutative square 
\[
\xymatrix{{A\, }\ar@{->>}[d]_f  \ar@{>->}[r]^a & {A'}\ar@{->>}[d]^{f'} \\
{B\, } \ar@{>->}[r]_{b} & B'
}
\]
in $\mathcal C$ where $f$ and $f'$ are normal epimorphisms, $a$ is a normal monomorphism and $b$ is a monomorphism, 
then $b$ is also a normal monomorphism. 
\end{enumerate}
As explained in \cite{IDC} any semi-abelian category is ideal determined. In particular all the examples mentioned before 
(groups, loops, rings, commutative algebras, associative algebras, cocommutative Hopf algebras, crossed modules, compact groups, ${\mathsf C}^{\star}$-algebras, etc.) 
are ideal determined. There are also some examples of ideal determined varieties that are not semi-abelian, as for instance the variety 
of implication algebras \cite{GU}.

The following theorem gives a natural condition guaranteeing the conditional flatness of  the pointed endofunctor $L$, without the toolkit of fiberwise localization.

\begin{theorem}
\label{thm:Birkhoff}
When $\mathcal X$ is a Birkhoff subcategory of an ideal determined category $\mathcal C$
 the corresponding pointed endofunctor $L = U F \colon \mathcal C \rightarrow \mathcal C$ is
conditionally flat.
\end{theorem}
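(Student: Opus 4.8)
The plan is to prove conditional flatness directly, i.e.\ to show that the pullback of an $L$-flat extension along an arbitrary morphism is again $L$-flat, using the Birkhoff property of $\mathcal X$ and the ideal-determined structure of $\mathcal C$, without invoking fiberwise localization. So let $0 \to K \xrightarrow{k} E \xrightarrow{f} Q \to 0$ be an $L$-flat extension — meaning $0 \to L(K) \to L(E) \to L(Q) \to 0$ is exact — and let $g \colon X \to Q$ be any morphism. Form the pullback $P = E \times_Q X$, which fits in an extension $0 \to K \xrightarrow{\kappa} P \xrightarrow{p} X \to 0$ with the same kernel $K$ (pullbacks of regular epis are regular epis in a regular category, and the kernel is unchanged). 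The goal is to show $0 \to L(K) \to L(P) \to L(X) \to 0$ is exact.

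**The key steps, in order.** First I would recall that since $\mathcal X$ is Birkhoff, every unit component $\eta_A \colon A \to L(A)$ is a regular (= normal) epimorphism, so $L$ sends the extension $0 \to K \to P \to X \to 0$ at least to a right-exact sequence $L(K) \to L(P) \to L(X) \to 0$; the only thing to prove is that the induced map $L(K) \to \ker(L(P) \to L(X))$ is an isomorphism, equivalently (since it is already a regular epi onto the kernel by a diagram chase) that it is a monomorphism. Second, I would set up the comparison diagram: applying $L$ to the pullback square and to the morphism of extensions $P \to E$, $X \to Q$ (identity on $K$) gives a commutative cube whose bottom face is $L$ applied to the original $L$-flat extension, which is exact by hypothesis. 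Third — and this is the heart — I would use the Birkhoff condition in its strong form: for a Birkhoff subcategory of an ideal determined category, the reflector $F$ is known to be admissible (this is recorded in the excerpt, Section~4, and follows from \cite{JK}), and more importantly there is the classical fact that $L=UF$ \emph{preserves pullbacks of regular epis along split epis / along the units}, and that the kernel comparison can be controlled via the ``ideal determined'' axiom (2): normal monomorphisms are stable under images. Concretely, I would factor $g \colon X \to Q$ through $\eta$: the map $L(P) \to L(E)$ together with $L(X) \to L(Q)$ and the exactness of the bottom face force, by the Short Five Lemma in the homological category $\mathcal C$, that $L(P) \to L(E) \times_{L(Q)} L(X)$ behaves well; then one identifies $\ker(L(P)\to L(X))$ with $\ker(L(E)\to L(Q)) = L(K)$ using that $L$ preserves the relevant pullback because $L(X) \to L(Q)$ is a regular epi between $L$-local objects. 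Here is where the ideal-determined axiom enters: to know $L(K) \to L(P)$ is a normal monomorphism (and hence the comparison is mono), one applies axiom (2) to the square relating $k \colon K \to E$ (normal mono) and $\kappa \colon K \to P$ via the normal epis $P \to E$ is \emph{not} what we want — rather one uses that $\eta_K, \eta_E$ are normal epis and the image of the normal mono $K \hookrightarrow E$ along them, compared with $K \hookrightarrow P$, yields normality of $L(K) \hookrightarrow L(P)$.

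**The main obstacle.** The delicate point is establishing that $L$ preserves the specific pullback $P = E\times_Q X$ — or at least preserves enough of it to conclude exactness — starting only from the flatness of the \emph{original} extension and not from any fiberwise hypothesis. The natural route is the reduction, familiar from the proof of Proposition~\ref{prop:condiflatfiberwise}, to the case where $K$ is $L$-local and the extension is one of $L$-local objects; but without fiberwise localization that reduction is unavailable, so one must instead exploit that for a Birkhoff subcategory $\ker(\eta_E)$ and $\ker(\eta_Q)$ interact compatibly with $k$ and $f$ (the Third Isomorphism Theorem, available in ideal determined categories, cf.\ the proof of Proposition~\ref{sufficient-c}), to compute $L(E)=E/[\text{something}]$, $L(Q)=Q/[\text{something}]$ compatibly, and then pull this computation back along $g$. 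Making the bookkeeping of these normal subobjects and their images precise — and checking at each stage that the relevant monomorphisms are \emph{normal}, so that the Short Five Lemma and Proposition~\ref{pullback-right} apply — is the technical crux; the ideal-determined axiom (2) (stability of normal monos under images) is exactly the tool that makes it go through.
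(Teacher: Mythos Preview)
Your proposal contains the right ingredients but buries them under unnecessary machinery and some incorrect steps. The detour through admissibility and ``$L$ preserves the relevant pullback because $L(X)\to L(Q)$ is a regular epi between $L$-local objects'' is misguided: $g\colon X\to Q$ is an arbitrary morphism, so $L(g)$ need not be a regular epimorphism, and in any case admissibility (preservation of pullbacks along units) is strictly weaker than what you are trying to prove --- indeed Proposition~\ref{prop:condiflatfiberwise2} shows that passing from admissibility to conditional flatness is exactly where fiberwise localization is normally invoked, which you are trying to avoid. The Third Isomorphism bookkeeping you sketch at the end is also a red herring here.

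The paper's argument is far shorter and you nearly had it. One shows directly that $0\to L(K)\to L(P)\to L(X)\to 0$ is exact in three steps. First, $L(p_2)\colon L(P)\to L(X)$ is a normal epi since $\eta_X\circ p_2$ is a composite of regular epis. Second --- this is the step you missed --- $L(K)\to L(P)$ is a \emph{monomorphism} simply because its composite with $L(P)\to L(E)$ equals $L(k)\colon L(K)\to L(E)$, which is mono by $L$-flatness of the original extension. Now apply the ideal-determined axiom~(2) to the square with normal epis $\eta_K,\eta_P$ and normal mono $\kappa\colon K\to P$ (not $k\colon K\to E$!) to conclude that $L(K)\to L(P)$ is a \emph{normal} mono, hence the kernel of its cokernel. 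Third, that cokernel is $L(p_2)$: $F$ preserves cokernels as a left adjoint, and $U$ preserves them because $\mathcal X$ is closed under regular quotients --- this is precisely where the Birkhoff hypothesis enters. No Short Five Lemma, no comparison cube, no pullback preservation is needed.
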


\begin{proof}
Let us prove that $L \colon \mathcal C \rightarrow \mathcal C$ is conditionally flat. We consider then an $L$-flat extension 
\begin{equation}\label{ext} 
\xymatrix{ 
0 \ar[r] & K \ar[r]^{k} & E \ar[r]^{f} & X\ar[r] & 0 }
\end{equation}
and a morphism $g\colon A \rightarrow X$ in $\mathcal C$. We construct the pullback of the original extension along $g$
and have to prove that this extension 
$0 \rightarrow K \longrightarrow P \xrightarrow {p_2} A \rightarrow 0$  is again $L$-flat.

We know that the induced arrow $L(p_2) \colon L(P) \rightarrow L(A)$ is a normal epimorphism, 
since the arrow $\eta_A \circ p_2 \colon P \rightarrow L(A)$ is a normal epimorphism, being a composite of two normal epimorphisms (see Subsection~\ref{subsec:regular}).
We are now going to prove that the arrow $L(K) \rightarrow L(P)$ is the kernel of $L(p_2) \colon L(P) \rightarrow L(A)$.
First observe that the arrow $L(K) \rightarrow L(P)$ is a monomorphism, since the arrow 
\[
L(K) \rightarrow L(P) \rightarrow L(E) = \xymatrix{L(K) \ar[r]^{L(k)} & L(E) }
\] 
is a  monomorphism (the original extension \eqref{ext} being $L$-flat). 
Since the category $\mathcal C$ is ideal determined and the square 
\[
\xymatrix{K\ar[d]  \ar@{->>}[r]^{\eta_K} & L(K)\ar[d] \\
P \ar@{->>}[r]_{\eta_P} & L(P)
}
\]
is commutative with $K \rightarrow P$ a normal monomorphism and $L(K) \rightarrow L(P)$ a monomorphism, it follows that the arrow $L(K) \rightarrow L(P)$
is a normal monomorphism as well. Consequently $L(K) \rightarrow L(P)$ is the kernel of its cokernel  $q\colon L(P) \rightarrow  Q$ in $\mathcal C$. 
However, this latter is isomorphic to $L(p_2) \colon L(P) \rightarrow L(A)$. Indeed, this follows from the fact that the functor $F \colon \mathcal C \rightarrow \mathcal X$ preserves cokernels (being a left adjoint) while $U \colon \mathcal X \rightarrow \mathcal C$ preserves them since $\mathcal X$ is closed in $\mathcal C$ under (regular) quotients by the Birkhoff assumption.
\end{proof}

\section{Fiberwise localizations and stability under extensions}
\label{sec:torsionfree}

In this section we show that, when $\mathcal C$ is homological, torsion-free reflections $F \colon \mathcal C \rightarrow \mathcal X$ can 
be characterized among (normal epi)reflections admitting fiberwise localization in terms of the property of \emph{stability under extensions} of $\mathcal X$ in $\mathcal C$. 
We recall that a torsion-free reflection is associated to a torsion theory, see for example \cite[Definition~1.1]{SemiLoc}. In particular, the only morphism
from a torsion object to a local object is the zero morphism.

Recall that a full (replete) subcategory $\mathcal X$ of a pointed category $\mathcal C$ is stable under extensions (in $\mathcal C$) if, given any short exact sequence 
\begin{equation}\label{ses}
\xymatrix{0 \ar[r] & K \ar[r] & X \ar[r] & Y \ar[r] & 0
}
\end{equation}
in $\mathcal C$ with $K$ and $Y$ in $\mathcal X$, then $X$ is also in $\mathcal X$.

\begin{proposition}
\label{lem:barPAPA}
Let $\mathcal C$ be a homological category, $\mathcal X$ a (normal epi)-reflective subcategory of $\mathcal C$ with the property that the reflector $F \colon \mathcal C \rightarrow \mathcal X$ admits fiberwise localization. Let us write $T(X)$ for the kernel of the $X$-reflection $\eta_X \colon X \rightarrow F(X)$ of any $X$ in $\mathcal C$. Then the following conditions are equivalent:
\begin{enumerate}
\item $\mathcal X$ is stable in $\mathcal C$ under extensions;
\item $F (T (X)) = 0$ for any object $X$ in~$\mathcal C$;
\item $F$ is semi-left-exact; 
\item $\mathcal X$ is a torsion-free subcategory in $\mathcal C$.
\end{enumerate}  
\end{proposition}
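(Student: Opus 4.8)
The plan is to establish the cycle of implications $(1)\Rightarrow(2)\Rightarrow(3)\Rightarrow(4)\Rightarrow(1)$, using the fiberwise localization as the main tool for $(1)\Rightarrow(2)$ and the homological hypotheses (Short Five Lemma, Proposition~\ref{pullback-right}) throughout.

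\textbf{Proof plan.} For $(1)\Rightarrow(2)$, fix $X$ in $\mathcal C$ and apply fiberwise localization to the extension $0 \to T(X) \to X \to F(X) \to 0$ (here $T(X) = \ker \eta_X$). This produces a commutative diagram of extensions with $0 \to F(T(X)) \to \overline X \to F(X) \to 0$ on the bottom row and a map $e\colon X \to \overline X$ inverted by $L$; since $F(T(X))$ and $F(X)$ both lie in $\mathcal X$, stability under extensions forces $\overline X$ into $\mathcal X$. But $\overline X$ is then $L$-local and $e$ is an $L$-equivalence, so $e$ must coincide with $\eta_X$ up to isomorphism, i.e. $\overline X \cong F(X)$. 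Comparing the bottom extension with $0 \to 0 \to F(X) \to F(X) \to 0$ via the Short Five Lemma (or directly: $F(X) \to \overline X$ is now an iso whose kernel $F(T(X))$ is therefore $0$) yields $F(T(X)) = 0$. For $(2)\Rightarrow(1)$, given an extension $0 \to K \to X \to Y \to 0$ with $K, Y \in \mathcal X$, the composite $K \to X \to F(X)$ factors through a map $K \to T(X)$ onto the kernel; but then apply $F$: since $F(T(X)) = 0$, chasing the nine-diagram (using that $F$ preserves normal epis as a left adjoint and the Short Five Lemma) shows $\eta_X$ is an isomorphism, hence $X \in \mathcal X$. The implication $(4)\Rightarrow(1)$ is standard: a torsion-free subcategory is always closed under extensions (if $K, Y$ are torsion-free in an extension, then any map from a torsion object $S$ to $X$ composes to zero into $Y$, so factors through $K$, hence is zero).

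\textbf{The equivalence with semi-left-exactness.} For $(2)\Leftrightarrow(3)$ I would invoke the Cassidy--Hébert--Kelly theory \cite{MR779198}: semi-left-exactness is exactly the statement that the unit is stable under the relevant pullbacks, and in a homological category this translates, via Proposition~\ref{pullback-right}, into the vanishing of $F$ on the kernels $T(X)$. More concretely, for $(2)\Rightarrow(3)$: given $g\colon X \to C'$ with $X, C'$ in $\mathcal X$ (here the diagram has $g\colon X\to UF(C)$, pulled back along $\eta_C$), form $P = C \times_{L(C)} X$; its kernel over $C$ is $T(C)$, and applying $F$ to $0\to T(C)\to P\to C\to 0$ together with $F(T(C))=0$ gives $F(P)\cong F(C)$, which is precisely the claim that $F$ preserves the pullback. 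For $(3)\Rightarrow(2)$ one specializes the pullback to $g = 0\colon 0 \to UF(C)$ (or an appropriate degenerate case) so that $P = T(C)$, and preservation of that pullback forces $F(T(C)) = 0$. Finally, for the direction completing the cycle back to torsion-freeness, $(2)\Rightarrow(4)$: one checks that $(T, F)$ with $T$ the radical and $\mathcal X$ the local objects forms a torsion theory — the condition $F(T(X))=0$ says exactly that $T(X)$ is a torsion object, every object $X$ sits in $0\to T(X)\to X\to F(X)\to 0$, and the only map from a torsion object to an $\mathcal X$-object is zero (since such a map factors through the reflection, which kills torsion objects).

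\textbf{Main obstacle.} The delicate point is $(1)\Rightarrow(2)$: one must be careful that the fiberwise localization diagram genuinely has $L(T(X)) = F(T(X))$ in the kernel position and that the resulting $\overline X$ is identified with $F(X)$ rather than merely some local object $L$-equivalent to $X$; this identification uses idempotency of $L$ and the universal property of $\eta_X$, and one should verify that the induced map $\overline X \to F(X)$ is compatible with the two units. A secondary subtlety is that $\mathcal X$ is only assumed to be a \emph{(normal epi)}-reflective subcategory, not Birkhoff, so one cannot use closure under quotients; all the diagram chases must stay within what the Short Five Lemma and Proposition~\ref{pullback-right} provide in a general homological category. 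Once the fiberwise construction is set up correctly, the remaining implications are routine homological-algebra diagram chases.
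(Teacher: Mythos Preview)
Your argument for $(1)\Rightarrow(2)$ and for $(4)\Rightarrow(1)$ is exactly the one in the paper. The paper, however, does not argue $(2)\Rightarrow(3)$ and $(3)\Rightarrow(4)$ directly: both are delegated to Theorem~4.12 of Bourn--Gran \cite{MR2262518}. Your choice to supply self-contained proofs of these steps is a genuine difference and makes the argument more transparent; the underlying ideas you give are the right ones (namely, that $F(T(C))=0$ forces $p_2\colon P\to X$ to satisfy the universal property of $\eta_P$, and conversely that specializing the pullback to $g=0$ recovers $T(C)$ as $P$).

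That said, your sketch of $(2)\Rightarrow(3)$ contains a slip that you should fix. The short exact sequence coming from the pullback is
\[
0 \longrightarrow T(C) \longrightarrow P \xrightarrow{\ p_2\ } X \longrightarrow 0,
\]
not $0\to T(C)\to P\to C\to 0$; the kernel of $p_2$ is $\ker(\eta_C)=T(C)$, and the quotient is $X$, not $C$. The conclusion is therefore $F(P)\cong X$ (equivalently, $p_2$ is the reflection $\eta_P$), not $F(P)\cong F(C)$. Once you write the correct sequence, the argument goes through: since $F(T(C))=0$, any map from $P$ to a local object kills $T(C)$ and hence factors uniquely through $p_2$, so $p_2$ has the universal property of $\eta_P$; because $F(\eta_C)$ is an isomorphism, the image square is then automatically a pullback.

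Two minor remarks. Your extra implication $(2)\Rightarrow(1)$ is not needed for the cycle and, as written, is garbled: the sentence ``the composite $K\to X\to F(X)$ factors through a map $K\to T(X)$ onto the kernel'' does not make sense, and you should simply drop this paragraph. Also, note that your direct $(2)\Rightarrow(3)$ does not require ``applying $F$ to the short exact sequence'' in the sense of $L$-flatness (which is not yet available); it only uses the universal property of the reflection, so phrase it that way.
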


\begin{proof}
$(1) \Rightarrow (2)$ Let us consider the short exact sequence \begin{equation}\label{ses-can}
\xymatrix{0 \ar[r] & T(X) \ar[r]^-{t_X} & X \ar[r]^-{\eta_X} & F(X) \ar[r] &  0} 
\end{equation}
and the associated exact sequence $0 \rightarrow F (T(X)) \rightarrow \overline X \rightarrow F(X) \rightarrow 0$ that exists by the assumption of fiberwise localization.
Since  the subcategory $\mathcal X$ is closed in $\mathcal C$ under extensions, $\overline X$ is in $\mathcal X$, and the fiberwise morphism $X \rightarrow \overline X$
is therefore an $F$-equivalence to an object in $\mathcal X$. It must thus be $\eta_X\colon X \rightarrow F(X)$ (up to isomorphism). This implies
that the kernel $F (T(X))$ of the morphism $\overline{X} \rightarrow F(X)$ is zero.

\noindent $(2) \Rightarrow (3)$ and $(3) \Rightarrow (4)$ both follow from Theorem $4.12$ in \cite{MR2262518}.

\noindent $(4) \Rightarrow (1)$ We briefly recall the known argument showing that a torsion-free subcategory $\mathcal X$ is closed under extensions in $\mathcal C$. Given  a short exact sequence \eqref{ses} with $K$ and $Y$ in $\mathcal X$, consider the canonical short exact sequence \eqref{ses-can}, where $T(X)$ is torsion and $F(X)$ is torsion-free. Clearly, $T(X) \rightarrow X \rightarrow Y$ is the zero morphism, hence $t_X$ factors through $K$. Since $T(X)$ is a subobject of $K$, $T(X) \in \mathcal X$ ($\mathcal X$ is closed under subobjects). Since it is also in the torsion subcategory, $T(X) \cong 0$ and $X \cong F(X) \in \mathcal X$, as desired.
\end{proof}

\noindent Unlike in the abelian case, in homological categories the property of stability under extensions of a (normal epi)-reflective subcategory $\mathcal X$ is not strong enough to guarantee that $F \colon \mathcal C \rightarrow \mathcal X$ is a reflector to a torsion-free subcategory, as observed in \cite{MR2342832}. The lemma above shows that, under the assumption of fiberwise localization, this is indeed the case.

\begin{remark}
\emph{From Proposition~\ref{prop:condiflatfiberwise} and Proposition \ref{lem:barPAPA} above we deduce that, under the assumption of functorial fiberwise localization, any semi-left-exact reflector $F \colon \mathcal C \rightarrow \mathcal X$ gives rise to a corresponding conditionally flat localization $L = UF \colon \mathcal C \rightarrow \mathcal C$. The converse does not hold however, even in the case of a (normal epi)-reflection associated to a Birkhoff subcategory, as
illustrated in the following classical example in the category of groups. }
\end{remark} 


\begin{example}
\label{ex:notproper}
{\rm Let us write $L_{ab}$ for the abelianization functor. 
The dihedral group $D_8$ of order $8$ abelianizes to $\mathbb Z/2\mathbb Z  \times \mathbb Z/2\mathbb Z$, an elementary abelian
$2$-group of rank two. Consider the following pullback in the category of groups:
\[
\xymatrix{
\mathbb Z/2\mathbb Z \ar@{->}[r] \ar@{->}[d] & 0 \ar[d] \\
D_8 \ar@{->}[r] & \mathbb Z/2 \mathbb Z\times \mathbb Z/2\mathbb Z
}
\]
The right hand side vertical morphism is a homomorphism of abelian groups
and the bottom morphism is the abelianization morphism of $D_8$. Its pullback however
is the map $\mathbb Z/2 \mathbb Z \rightarrow 0$, which is not the abelianization morphism for $\mathbb Z/2  \mathbb Z$.
Since fiberwise localization always exists in the category $\mathsf{Grp}$ of groups, Proposition~\ref{lem:barPAPA} applies and
tells us that the above problem reflects the fact that the subcategory $\mathsf{Ab}$ of abelian groups is not closed under extensions 
in $\mathsf{Grp}$, it is not torsion-free (in the categorical sense).
}
\end{example}

The fact that the abelianization functor is not semi-left-exact is well known. The ``relative version'' of the Galois theory 
developed by Janelidze \cite{MR1061480}, later also in collaboration with Kelly \cite{JK}, where the class of morphisms to be classified 
by the Galois theorem is the one of regular epimorphisms, was partly motivated by the possibility of applying their approach to any 
Birkhoff subcategory of a ``sufficiently good'' algebraic category. Here ``sufficiently good'' could mean being a semi-abelian 
variety of universal algebras \cite{BournJanelidze}, for instance, yielding many examples of interest in algebra.

\section{The case of nullifications}
\label{sec:nullification}
The results of the previous section apply to nullification functors. Let $\mathcal C$ be a semi-abelian category,
$A$ an object in $\mathcal C$, and define $\mathcal X \subset \mathcal C$ to be the (replete)
reflective subcategory of \emph{$A$-null} objects, i.e. of those objects $Z$ such that $\text{Hom}(A, Z) = 0$.

When it exists, the associated localization functor is written $P_A$ and called $A$-\emph{nullification} (or $A$-\emph{periodization}).
The construction is due to Bousfield in a homotopical setting, and can be found for example in Hirschhorn's \cite{MR1944041}, a reference in 
an algebraic context is Casacuberta, Peschke, and Pfenniger's \cite[Theorem~1.4]{MR1170580}. In all cases $P_A X$ is constructed
as a transfinite filtered colimit of iterated quotients of all morphisms from $A$. A cardinality argument is invoked to explain when one can stop the iteration.

In the recent preprint \cite{MSS} Monjon, Scherer and Sterck gave in Proposition $2.7$ an explicit construction of the nullification functor 
in the (semi-abelian) category of crossed modules. By looking at the arguments in their proof one realizes that these still apply to any semi-abelian 
variety of universal algebras \cite{BournJanelidze}. These are precisely those varieties (= finitary equational classes) whose algebraic theories have 
a unique constant $0$, $n\ge 1$ binary terms $\alpha_i(x,y)$ and one $(n+1)$-ary term $\beta$ satisfying the identities $\alpha_i(x,x)=0$ 
(for $i\in \{1, \cdots, n\}$), $\beta (\alpha_1(x,y), \cdots , \alpha_n(x,y), y)= x$. For example, in the case of the variety of groups, by using the 
multiplicative notation for the group operation, one can choose $0=1$, $\alpha_1(x,y)= x\cdot  y^{-1}$ and $\beta (x,y) = x\cdot y$. Note that, 
for a variety of universal algebras, being homological or being semi-abelian are equivalent properties, since a variety is always Barr-exact and cocomplete.
We work here with \emph{sets} equipped with finitary operations satisfying a \emph{set} of identities, so set-theoretic arguments are available.
Moreover any variety of universal algebras is cocomplete. Hence the proof of \cite[Proposition~2.7]{MSS} applies.

\begin{proposition}
\label{prop:nullification}
Let $\mathcal C$ be a semi-abelian variety of universal algebras and $A$ an object of $\mathcal C$. Then
the $A$-nullification functor $P_A$ exists and the coaugmentation morphism $\eta_X\colon X \rightarrow P_A X$ is a normal 
epimorphism, for any object $X$.
\end{proposition}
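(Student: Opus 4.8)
The plan is to establish the result, in the generality of a semi-abelian variety $\mathcal C$, by following the transfinite construction of the nullification functor carried out for crossed modules in \cite[Proposition~2.7]{MSS} and checking that each ingredient it relies on survives. Recall that a semi-abelian variety is Barr-exact, pointed, protomodular and cocomplete; regular epimorphisms coincide with normal epimorphisms and with surjective homomorphisms; every subobject $S \hookrightarrow X$ has a normal closure, namely the smallest normal subobject of $X$ containing $S$, which in particular contains $S$; and filtered colimits are computed on underlying sets, so a transfinite composite of surjective homomorphisms is again one.

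First I would introduce, for the fixed object $A$ and any $X$, a one-step approximation $r_X \colon X \to R(X)$, defined as the cokernel of the normal closure in $X$ of the join of the images of all morphisms $A \to X$. Since $\mathcal C$ is a variety these form a \emph{set} of morphisms, so the join and its normal closure exist and $r_X$ is a normal epimorphism. Iterating, put $X^{(0)} = X$, $X^{(\alpha+1)} = R(X^{(\alpha)})$, and $X^{(\lambda)} = \operatorname{colim}_{\alpha < \lambda} X^{(\alpha)}$ at a limit ordinal $\lambda$; this yields a chain of surjective homomorphisms under $X$, and at each stage $X^{(\alpha)}$ is a quotient of $X$, so $|X^{(\alpha)}| \leq |X|$. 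Hence the chain stabilizes at some ordinal $\alpha$, i.e. $X^{(\alpha)} \to X^{(\alpha+1)}$ is an isomorphism; then the normal closure of the images of the maps $A \to X^{(\alpha)}$ is $0$, and since that normal closure contains each such image, every morphism $A \to X^{(\alpha)}$ vanishes, so $X^{(\alpha)}$ is $A$-null. I would then set $P_A X := X^{(\alpha)}$ and let $\eta_X \colon X \to P_A X$ be the transfinite composite, which is a composite of normal epimorphisms, hence a surjective homomorphism, hence a normal epimorphism --- this is the second assertion.

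To see that $\eta_X$ is the reflection of $X$ into the $A$-null objects, take an $A$-null object $Z$ and a morphism $g \colon X \to Z$. Each map $A \to X$ becomes zero after composing with $g$, hence factors through $\ker g$, so the join of their images, and therefore its normal closure, lies in the normal subobject $\ker g$; thus $g$ factors through $r_X$. A transfinite induction, using at limit stages that filtered colimits are computed set-theoretically, shows $g$ factors through every $X^{(\alpha)}$, hence through $\eta_X$, and uniquely, since $\eta_X$ is an epimorphism. Functoriality of $P_A$ and naturality of $\eta$ follow from the obvious functoriality of each $R(-)$.

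The step requiring real care --- and the one for which the proof of \cite{MSS} should be consulted rather than merely quoted --- is the cardinality bookkeeping that forces the transfinite iteration to terminate: one must confirm that every stage, \emph{including} the colimits at limit ordinals, is still a quotient of $X$, so that the set of isomorphism classes of quotients of $X$ is forced to stabilize, and that a stable stage is genuinely $A$-null. In a semi-abelian variety all of this goes through because $\mathcal C$ is cocomplete, surjective homomorphisms are stable under transfinite composition, and filtered colimits are set-theoretic; so no obstruction specific to the general case, as opposed to the crossed-module case, arises.
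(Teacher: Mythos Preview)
Your proposal is correct and follows essentially the same approach as the paper: the paper's proof is extremely brief, merely observing that the construction of \cite[Proposition~2.7]{MSS} goes through verbatim in any semi-abelian variety (cocompleteness and set-theoretic cardinality arguments being available) and that the resulting surjective coaugmentation is therefore a normal epimorphism. You have simply unpacked that construction in detail, with the same one-step quotient, transfinite iteration, and stabilization argument.
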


\begin{proof}
We only need to note that the construction yields a surjective coaugmentation morphism, which is thus a regular epimorphism.
The semi-abelian assumption on $\mathcal C$ then implies that $\eta_X\colon X \rightarrow P_A (X)$ actually  is a normal epimorphism.
\end{proof}

We show now that in the presence of fiberwise localization, nullification functors are conditonally flat, in fact even semi-left-exact. Let us
write $\overline P_A (X)$ for the kernel of the $A$-nullification $\eta_X \colon X \rightarrow P_A(X)$.
The equivalent characterization from Proposition~\ref{lem:barPAPA}(2) that $P_A (\overline P_A (X)) = 0$ for any object $X$ in~$\mathcal C$
is an algebraic analogue of Farjoun's \cite[Theorem~1.H.2]{Dror}.


\begin{corollary}
\label{thm:nullificationproper}
Consider a nullification functor $P_A$ on a semi-abelian variety of universal algebras $\mathcal C$, and assume that $P_A$ admits a
functorial fiberwise localization. Then $P_A$ is semi-left-exact. In particular $P_A$ is conditionally flat.
\end{corollary}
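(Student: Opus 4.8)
The plan is to verify that the hypotheses of Proposition~\ref{lem:barPAPA} are all met for the nullification functor $P_A$, since that proposition already packages the equivalence between semi-left-exactness and the other conditions. First I would observe that, by Proposition~\ref{prop:nullification}, the coaugmentation morphism $\eta_X\colon X \rightarrow P_A(X)$ is a normal epimorphism for every object $X$, so that $\mathcal X$, the reflective subcategory of $A$-null objects, is a (normal epi)-reflective subcategory of the homological category $\mathcal C$. The functorial fiberwise localization is assumed in the statement of the corollary, so hypothesis by hypothesis Proposition~\ref{lem:barPAPA} applies verbatim.

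The substantive point is then to check condition~$(1)$ of Proposition~\ref{lem:barPAPA}, namely that the subcategory $\mathcal X$ of $A$-null objects is stable under extensions in $\mathcal C$. This is the only thing that is genuinely specific to nullification, and I expect it to be the main (though not very hard) obstacle. The argument is the standard one: given a short exact sequence $0 \rightarrow K \rightarrow X \rightarrow Y \rightarrow 0$ in $\mathcal C$ with $K$ and $Y$ both $A$-null, one must show $\operatorname{Hom}(A, X) = 0$. Take any morphism $a\colon A \rightarrow X$; its composite with $X \rightarrow Y$ lands in $\operatorname{Hom}(A,Y) = 0$, hence $a$ factors through the kernel $K \rightarrow X$, giving $\bar a \colon A \rightarrow K$. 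But $\operatorname{Hom}(A,K) = 0$ since $K$ is $A$-null, so $\bar a = 0$ and therefore $a = 0$. (Here one uses that $K \rightarrow X$ is a monomorphism, which holds in the homological — indeed semi-abelian — category $\mathcal C$.) Thus $\operatorname{Hom}(A,X)=0$ and $X$ is $A$-null, proving stability under extensions.

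Once condition~$(1)$ is established, Proposition~\ref{lem:barPAPA} immediately gives the equivalence with condition~$(3)$, which is precisely semi-left-exactness of $F$ in the sense of Cassidy, H\'ebert, and Kelly. Finally, to conclude conditional flatness of $L = P_A$ I would invoke the Remark following Proposition~\ref{lem:barPAPA}: under the assumption of functorial fiberwise localization, any semi-left-exact reflector $F$ gives rise to a conditionally flat localization $L = UF$. Alternatively one can go through Proposition~\ref{prop:condiflatfiberwise}, noting that semi-left-exactness (condition~$(3)$ of Proposition~\ref{lem:barPAPA}, with $g$ an arbitrary morphism between objects of $\mathcal X$) is a priori stronger than condition~$(4)$ of Proposition~\ref{prop:condiflatfiberwise} (where $g$ is required to be a regular epimorphism), so admissibility holds and hence, by Proposition~\ref{prop:condiflatfiberwise}, $L$ is conditionally flat. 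Either route closes the argument; I would present the short one via the Remark and mention the alternative in passing.
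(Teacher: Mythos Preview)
Your proposal is correct and follows essentially the same route as the paper: verify the hypotheses of Proposition~\ref{lem:barPAPA} (normal epi-reflection via Proposition~\ref{prop:nullification}, fiberwise localization assumed), check condition~(1) by the elementary extension-closure argument for $A$-null objects, deduce semi-left-exactness, and then pass to conditional flatness. The only cosmetic difference is that the paper leaves the extension-closure argument implicit and cites Proposition~\ref{prop:condiflatfiberwise2} for the final step, whereas you spell out the argument and invoke the Remark after Proposition~\ref{lem:barPAPA} (or Proposition~\ref{prop:condiflatfiberwise}); logically these are the same.
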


\begin{proof}
In view of Proposition~\ref{lem:barPAPA} it is sufficient to verify one of the equivalent conditions. By definition of $A$-local
objects it is easy to see that they are closed under extensions. Hence $P_A$, which exists by Proposition~\ref{prop:nullification}, is
semi-left-exact, a property which is stronger than admissibility for all regular epimorphisms. We conclude by Proposition~\ref{prop:condiflatfiberwise2}
that $P_A$ is conditionally flat.
\end{proof}

\section{A model categorical interpretation}
\label{sec:proper}
In this article we chose to study how pullbacks of exact sequences behave and in the previous sections we related this to semi-left-exactness,
a  stronger admissibility property (preservation of pullbacks along \emph{any} morphism between local objects versus preservation of pullbacks 
along any regular epimorphism between local objects).
From a model theoretic perspective, this corresponds to right properness as we explain next.

Any category with finite limits and colimits admits a \emph{discrete} model structure where weak equivalences are isomorphisms and 
all morphisms are fibrations and cofibrations. This easy observation has been already made by Bousfield, \cite[Examples~2.3]{MR57:17648},
who also constructed new model structures where the class of weak equivalences is  $\mathcal E(L)$, all morphisms inverted
by a localization functor $L$, i.e., $L$-equivalences. Cofibrations do not change and the class of fibrations 
coincides now with $\mathcal M(L)$ (using the notation from Section~\ref{sec:factorization}). This is not immediately obvious as
we require the lift to be unique in a factorization system, but not in a model category. The reason is that the model categorical lift
is unique up to homotopy in the associated homotopy factorization system and in the discrete setting ``unique up to homotopy"
means unique.

This process is called left Bousfield localization, we cite Salch's \cite[Proposition~3.5]{MR3606285} for a statement in the line of the present work. Our final propositions are just reformulations of the fact that a semi-left-exact reflection is also characterised by the property that for the induced factorization system $(\mathcal E, \mathcal M)$ the morphisms in $\mathcal E$ are stable under pullback along morphisms in $\mathcal M$.

\begin{proposition}
\label{prop:local} {\rm [Salch]}
Let $\mathcal C$ be a finitely cocomplete, finitely well-complete category and $L$ a localization functor. There is an $L$-local model structure on 
$\mathcal C$ where weak equivalences are the $L$-equivalences $\mathcal E(L)$, all morphisms are cofibrations, and 
the class of fibrations is $\mathcal M(L)$.
\end{proposition}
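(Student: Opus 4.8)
The plan is to build the $L$-local model structure directly from the factorization system $(\mathcal E(L),\mathcal M(L))$ recalled in Subsection~\ref{subsec:localization}, taking advantage of the fact that in a \emph{discrete} setting the model-categorical structure is tightly governed by the factorization system. First I would fix notation: under the hypotheses ($\mathcal C$ finitely co-complete and finitely well-complete, $L$ a localization functor) Cassidy--H\'ebert--Kelly \cite[Corollary~3.4]{MR779198} tells us that $(\mathcal E(L),\mathcal M(L))$ is an honest factorization system, where $\mathcal E(L)$ is the class of $L$-equivalences and $\mathcal M(L)=\mathcal E(L)^{\perp}$. Declare the weak equivalences to be $\mathcal W=\mathcal E(L)$, the cofibrations to be \emph{all} morphisms, and the fibrations to be $\mathcal M(L)$; the trivial cofibrations are then $\mathcal E(L)$ and the trivial fibrations are exactly the isomorphisms (since an isomorphism lies in both classes, and conversely a map in $\mathcal E(L)\cap\mathcal M(L)$ admits a lift against itself, forcing it to be invertible).

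Then I would check the model category axioms one by one. Finite limits and colimits exist by hypothesis, so (MC1) holds. The two-out-of-three property (MC2) for $\mathcal E(L)$ is immediate because $L$-equivalences are the maps inverted by a functor: if two of $g$, $f$, $gf$ are inverted by $L$, so is the third. For retracts (MC3): $\mathcal E(L)$ is closed under retracts because being sent to an isomorphism by $L$ is a retract-stable condition in the arrow category; $\mathcal M(L)$ is retract-closed as the right class of a (pre)factorization system, and the class of all morphisms is trivially retract-closed. The factorization axiom (MC5) is precisely the statement that $(\mathcal E(L),\mathcal M(L))$ is a factorization system: factor $f=m\circ e$ with $e\in\mathcal E(L)$, $m\in\mathcal M(L)$, which handles the (trivial cofibration, fibration) factorization; for the (cofibration, trivial fibration) factorization simply write $f=f\circ \mathrm{id}$, since every map is a cofibration and every identity is an isomorphism, hence a trivial fibration.

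The one axiom that genuinely needs an argument is the lifting axiom (MC4). One half, lifting a trivial cofibration against a fibration, is the defining orthogonality of the factorization system: $\mathcal E(L)={}^{\perp}\mathcal M(L)$ gives a (unique) filler in every square with left leg in $\mathcal E(L)$ and right leg in $\mathcal M(L)$. The other half asks for a lift in a square whose left leg is an arbitrary cofibration and whose right leg is a trivial fibration, i.e.\ an isomorphism; but a square with an isomorphism on the right always has a (unique) filler obtained by composing with its inverse. The subtle point worth a sentence of commentary --- and the only place where ``model category'' differs from ``factorization system'' --- is that a factorization system demands \emph{unique} lifts whereas a model category demands only \emph{existence}; here, in the discrete setting, the homotopy relation on lifts is trivial, so ``unique up to homotopy'' collapses to ``unique'', and there is no loss. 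I do not expect any real obstacle: the proof is essentially a dictionary between the axioms of a factorization system (plus finite (co)completeness) and the model category axioms, and the only care needed is the bookkeeping around MC4 and the remark, already flagged in the surrounding text, that uniqueness of lifts is automatic here. I would close by noting that this recovers Bousfield's original construction \cite[Examples~2.3]{MR57:17648} and matches Salch \cite[Proposition~3.5]{MR3606285}.
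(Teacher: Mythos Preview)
Your proposal is correct and matches the approach the paper itself outlines. Note that the paper does not supply a proof of this proposition: it attributes the statement to Salch \cite[Proposition~3.5]{MR3606285} and merely explains in the surrounding paragraph why the factorization system $(\mathcal E(L),\mathcal M(L))$ upgrades to a model structure (the key point being that, in the discrete setting, the model-categorical lift ``unique up to homotopy'' collapses to ``unique''). Your verification of the axioms is exactly the unpacking of that paragraph, and the identification of trivial fibrations with isomorphisms, together with the two factorizations $f=m\circ e$ and $f=f\circ\mathrm{id}$, is the right way to make it precise.
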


In a model category it is a direct consequence of the axioms that the pullback of a fibration is a fibration. But weak equivalences need not be preserved by pullbacks, not even by homotopy pullbacks. A model category
is \emph{right proper} if the pullback of any weak equivalence $X \rightarrow B$ along any fibration $E \twoheadrightarrow B$ 
is a weak equivalence. The discrete model structure is right proper since the pullback of an isomorphism along any map is an isomorphism.

Now, given a localization functor $L$ on $\mathcal C$, it is then a natural question to ask when the left Bousfield localized model structure
described in Proposition~\ref{prop:local} is again right proper. Rosick\'{y} and Tholen noticed in \cite[3.6]{MR2369170} that a result
by Cassidy, H\'ebert, and Kelly, \cite[Theorem~4.3]{MR779198}, allows one to characterize right proper localized model structures as
those corresponding to semi-left-exact reflections.

\begin{proposition}
\label{prop:semileftexact}{\rm [Rosick\'y-Tholen]}
Let $\mathcal C$ be a finitely complete category and $L$ a localization functor. The $L$-local model structure is right proper
if and only if $L$ is semi-left-exact.
\end{proposition}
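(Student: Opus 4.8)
The plan is to unwind the definitions and exploit the correspondence between factorization systems and model structures established by Proposition~\ref{prop:local}. Recall that in the $L$-local model structure weak equivalences are exactly the elements of $\mathcal E(L)$, fibrations are exactly $\mathcal M(L)$, and every map is a cofibration. So right properness amounts to the following assertion: whenever we form a pullback square
\[
\xymatrix{
P \ar[r] \ar[d] & X \ar[d]^{g} \\
C \ar[r]_{w} & B
}
\]
with $g \in \mathcal M(L)$ and $w \in \mathcal E(L)$, the top arrow $P \rightarrow X$ again lies in $\mathcal E(L)$. The goal is to see that this is literally the statement that $L$ is semi-left-exact, via \cite[Theorem~4.3]{MR779198}.

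First I would reduce the general right-properness square to the universal case appearing in the definition of semi-left-exactness. The point is that an $L$-local fibration $g\colon X \twoheadrightarrow B$ need not have $X$ and $B$ local, but one can factor the weak equivalence $w\colon C \rightarrow B$ through $\eta_C\colon C \rightarrow L(C)$, and use the fact that $B$ itself maps to $L(B)$; combined with the orthogonality of $\mathcal E(L)$ and $\mathcal M(L)$, a map in $\mathcal M(L)$ with local codomain is automatically a pullback of a map between local objects along the coaugmentations. Decomposing the original pullback as a composite of pullbacks — one of them a pullback of $\eta_C$ along a map between local objects, the others pullbacks of things already known to be inverted by $L$ — reduces the problem to checking that $L$ preserves the universal pullbacks
\[
\xymatrix{
C\times_{L(C)}X \ar[r] \ar[d] & X \ar[d]^{g} \\
C \ar[r]_{\eta_C} & UF(C)
}
\]
with $g$ a morphism between local objects. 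But ``$L$ preserves such a pullback'' is visibly equivalent to ``the top map $C\times_{L(C)}X \rightarrow X$ is an $L$-equivalence'' (using that $\eta_C$ already is one, and that $L$ is idempotent), which closes the loop with right properness.

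The cleanest route, and the one I would actually write, is simply to invoke the two cited results directly: Cassidy--H\'ebert--Kelly's Theorem~4.3 characterizes semi-left-exactness of $L$ by the stability property that the pullback of any $\mathcal E(L)$-map along any $\mathcal M(L)$-map is again in $\mathcal E(L)$, and Rosick\'y--Tholen's observation in \cite[3.6]{MR2369170} identifies exactly this with right properness of the $L$-local model structure, given the identification of weak equivalences with $\mathcal E(L)$ and fibrations with $\mathcal M(L)$ from Proposition~\ref{prop:local}. So the proof is essentially: spell out right properness in the $L$-local model structure, observe it says precisely ``$\mathcal E(L)$ is stable under pullback along $\mathcal M(L)$'', and cite \cite[Theorem~4.3]{MR779198} to conclude this is equivalent to semi-left-exactness. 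The main obstacle — and the only non-formal point — is the reduction in the previous paragraph: one must be careful that the definition of semi-left-exact in \cite{MR779198} quantifies only over the \emph{universal} pullback squares along coaugmentations with local fibres, whereas right properness quantifies over \emph{all} fibrations in $\mathcal M(L)$; bridging this gap is exactly the content of Theorem~4.3 there, so it is legitimate to lean on it, but the write-up should make explicit that this is where the work has been done rather than pretending it is immediate.
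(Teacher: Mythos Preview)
Your proposal is correct and aligns exactly with the paper's treatment: the paper does not give an independent proof but attributes the result to Rosick\'y--Tholen \cite[3.6]{MR2369170}, who observed that \cite[Theorem~4.3]{MR779198} characterizes semi-left-exactness precisely by stability of $\mathcal E(L)$ under pullback along $\mathcal M(L)$, which is right properness once one identifies weak equivalences and fibrations via Proposition~\ref{prop:local}. Your write-up actually supplies more detail than the paper does, including the reduction to the universal pullback along $\eta_C$, but the core approach---cite Theorem~4.3 of Cassidy--H\'ebert--Kelly and unwind the model-categorical definitions---is the same.
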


Therefore Corollary~\ref{thm:nullificationproper} tells us that the $P_A$-local model structure is right proper,
an analogue of the well-known fact that nullification functors in spaces yield a right proper left Bousfield
localized model structure, \cite{MR1997044}, see also Wendt's \cite[Corollary~6.1]{MR3071143} for simplicial sheaves on a site.
However, in an algebraic setting, conditional flatness is different from right properness because pulling back an $L$-equivalence
along a regular epimorphism is not as general as pulling back along an arbitrary fibration, i.e. an arbitrary morphism in
the localized model structure.


\bibliographystyle{amsplain}

\def\cprime{$'$}
\providecommand{\bysame}{\leavevmode\hbox to3em{\hrulefill}\thinspace}
\providecommand{\MR}{\relax\ifhmode\unskip\space\fi MR }
\providecommand{\MRhref}[2]{%
  \href{http://www.ams.org/mathscinet-getitem?mr=#1}{#2}
}
\providecommand{\href}[2]{#2}



\end{document}